\documentclass[a4paper,12pt]{amsart}
\usepackage[utf8]{inputenc}
\usepackage{amsmath, amssymb, amsthm}
\usepackage{geometry}
\usepackage{enumitem}
\usepackage{hyperref}

\usepackage{listings}
\usepackage{xcolor}

\lstdefinestyle{python}{
  language=Python,
  basicstyle=\ttfamily\small,
  keywordstyle=\color{blue},
  commentstyle=\color{gray},
  stringstyle=\color{red!70!black},
  numbers=left,
  numberstyle=\tiny\color{gray},
  showstringspaces=false,
  breaklines=true,
  frame=single,
  tabsize=2,
  captionpos=b
}

\newtheorem{theorem}{Theorem}[section]
\newtheorem{lemma}[theorem]{Lemma}

\title[Tribonacci palindromic concatenations of repdigits]{\textbf{Tribonacci numbers that are palindromic concatenations \\
of two distinct repdigits}}

\author[K. Faye]{Khady Faye}
\address{K. Faye, ,UFR SAT, Universit\'e Alioune Diop, Bambey, 30, S\'en\'egal}
\email{fkhady94@gmail.com}

\author[B. Faye]{Bernadette Faye}
\address{B. Faye,
	UFR SAT, Universit\'e Alioune Diop, Bambey, 30, S\'en\'egal}
\email{bernadette.faye@uadb.edu.sn}

\begin{document}
\maketitle

\begin{abstract}
 Let $(T_n)_{n\ge1}$ be the Tribonacci sequence, defined by $T_1=T_2=1$,
$T_3=2$, and $T_n=T_{n-1}+T_{n-2}+T_{n-3}$ for $n\ge4$. In this paper, we study the Tribonacci numbers whose decimal representation is a palindromic
concatenation of two distinct repdigits, that is, numbers of the shape $\overline{d_1\cdots d_1\,d_2\cdots d_2\,d_1\cdots d_1}$ with
$1\le d_1\le9$, $0\le d_2\le9$, $d_1\ne d_2$. We prove that no Tribonacci number is a palindromic
concatenation of two distinct repdigits.
The proof combines lower bounds for linear forms in logarithms, the Baker-Davenport reduction
method, and a finite computational verification.\\

Keywords: Tribonacci numbers, Palindromes, Repdigits, linear forms in logarithms, Baker-Davenport reduction.\\

Mathematics Subject Classification: 11B39, 11D61, 11J86, 11A63, 11Y50.

\end{abstract}

\section{Introduction}

A \emph{repdigit} is a positive integer all of whose decimal digits are
equal, meaning  of the form $$N:=d\cdot\frac{10^j-1}{9}, $$ for some
digit $1\le d\le9$ and length $j\ge1$. A positive integer is a
\emph{palindromic} concatenation of two distinct repdigits if it has the
decimal representation
\begin{equation}\label{eq:shape}
\overline{\underbrace{d_1\cdots d_1}_{\ell}\ \underbrace{d_2\cdots d_2}_{m}\ \underbrace{d_1\cdots d_1}_{\ell}},
\qquad 1\le d_1\le9,\ 0\le d_2\le9,\ d_1\ne d_2,\ \ell,m\ge1.
\end{equation}
Over the last few years, the problem of determining all members of a
given linear recurrence sequence with this decimal shape has been solved
for a number of classical sequences. For example, in
\cite{Batte2024} Batte and al. show that
no Lucas number has this shape. In \cite{ChalebgwaDdamulira2021} Chalebgwa and Ddamulira proved that the padovan sequence has exactly two solutions of this shape, namely
$P_{15}=151$ and $P_{21}=616$.  The $k$-generalized Fibonacci
numbers and the  $k$-generalized Lucas numbers for
$k\ge3$  have been studied in ~\cite{BatteLuca2025}  and  ~\cite{BatteKaggwa2025} respectively. There are no solutions again. Recently  Ddamulira proved in \cite{Ddamulira2025} that the Tribonacci-Lucas numbers is also empty of solutions.\\
The proofs in this circle of results follow a common template, going
back to the treatment of related repdigit problems for the Tribonacci
sequence itself in~\cite{Ddamulira2020} (concatenations of \emph{two}
plain repdigits, a different combinatorial shape from \eqref{eq:shape}). The main argument of the proof is to
write the recurrence term via its Binet formula, translate the digit
condition into a small linear form in logarithms of algebraic numbers,
bound it from below by Matveev's theorem~\cite{Matveev2000}, and reduce
the resulting astronomically large bound to a range that allows an
exhaustive computer search via the Baker--Davenport
method~\cite{BakerDavenport1969} or LLL lattice
reduction~\cite{Cohen1993}.\\

Despite this active line of work, and despite the fact that the
Tribonacci sequence's own repdigit-concatenation problem was already
treated in~\cite{Ddamulira2020}, the specific question we addressed here
is the palindromic concatenations of two \emph{distinct} repdigits, in the
sense of relation \eqref{eq:shape}, for the Tribonacci sequence $(T_n)$ itself. We need to say that the Tribonacci sequence is different to its companion Tribonacci-Lucas sequence.\\
We prove the following result:
\begin{theorem}\label{thm:main}
There is no Tribonacci number that is a palindromic concatenation of
two distinct repdigits.
\end{theorem}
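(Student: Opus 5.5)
We follow the standard template. Write the palindrome as
\[
N=\frac{d_1(10^{\ell}-1)}{9}\,10^{m+\ell}+\frac{d_2(10^{m}-1)}{9}\,10^{\ell}+\frac{d_1(10^{\ell}-1)}{9},
\]
so that
\[
9T_n=d_1 10^{2\ell+m}+(d_2-d_1)10^{\ell+m}+(d_1-d_2)10^{\ell}-d_1 .
\]
We use the Binet-type formula $T_n=c_\alpha\alpha^{n}+c_\beta\beta^n+c_\gamma\gamma^n$. Here $\alpha\approx1.839$ is the dominant root of $x^3-x^2-x-1$, $|\beta|=|\gamma|<1$, and $c_\alpha=1/(-\alpha^2+4\alpha-1)$, so that $|T_n-c_\alpha\alpha^n|<1/2$. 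We also use the standard bounds $\alpha^{n-2}\le T_n\le\alpha^{n-1}$. Comparing sizes gives $2\ell+m-1<n\log\alpha/\log 10<2\ell+m+1$, up to a constant. First, a direct computer search handles $n\le 500$, say, and finds no Tribonacci number of the shape \eqref{eq:shape}. From then on we assume $n>500$.

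We proceed in three steps, each peeling off one block of digits and applying Matveev's theorem to a linear form in three logarithms.

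\emph{Step 1.} Write $9c_\alpha\alpha^n-d_110^{2\ell+m}=(d_2-d_1)10^{\ell+m}+\dots$ and divide by $d_110^{2\ell+m}$. This gives
\[
\Lambda_1=\left|\frac{9c_\alpha}{d_1}\alpha^n10^{-(2\ell+m)}-1\right|<\frac{20}{10^{\ell}}.
\]
We check that $\Lambda_1\ne0$ by applying a Galois automorphism sending $\alpha\to\beta$: the conjugate of $c_\alpha\alpha^n$ would then have absolute value at least $10^{2\ell+m}/9$, which is impossible since it is less than $1$. Matveev's theorem with $\gamma_1=9c_\alpha/d_1$, $\gamma_2=\alpha$, $\gamma_3=10$, degree $D=3$, and $B=n$ then yields $\ell\log10<C_1(1+\log n)$, with $C_1\approx10^{13}$.

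\emph{Step 2.} Group the first two blocks:
\[
9c_\alpha\alpha^n-\bigl(d_110^{\ell}+(d_2-d_1)\bigr)10^{\ell+m}=O(10^{\ell}).
\]
This gives
\[
\Lambda_2=\left|\frac{9c_\alpha}{d_110^{\ell}+d_2-d_1}\alpha^n10^{-(\ell+m)}-1\right|<\frac{20}{10^{m}},
\]
where the height of the first coefficient is $O(\ell)$. Nonvanishing follows by the same conjugation argument, and Matveev gives $m\log10<C_2(1+\log n)^2$.

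\emph{Step 3.} Use the full expression. The right-hand side has no unknown remainder except $-d_1$ and the conjugate terms, so
\[
\Lambda_3=\left|\frac{9c_\alpha}{d_110^{\ell+m}+(d_2-d_1)10^{m}+(d_1-d_2)}\alpha^n10^{-\ell}-1\right|<\frac{C}{\alpha^{n}}.
\]
The height of the coefficient is $O(\ell+m)$, so Matveev gives $n\log\alpha<C_3(1+\log n)^3$. This yields an absolute bound, of order $n<10^{45}$ or so.

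\emph{Reduction.} We then apply the Baker--Davenport reduction three times. Taking logarithms, $\Gamma_1=n\log\alpha-(2\ell+m)\log10+\log(9c_\alpha/d_1)$ satisfies $|\Gamma_1|<40\cdot10^{-\ell}$. Applying the Dujella--Pethő version of Baker--Davenport with $\tau=\log\alpha/\log10$, for each of the nine values of $d_1$, and using a convergent with denominator exceeding $6M$ for $M=10^{45}$, gives $\ell\le 50$ or so. Feeding this into $\Gamma_2$, for all $d_1,d_2$ and $\ell\le50$, bounds $m\le 60$. Feeding both into $\Gamma_3$ bounds $n\le 500$, say. This returns us to the range already checked by computer.

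\emph{Main obstacle.} The main difficulty is the reduction step. Some parameters may make the constant term $\mu$ a rational combination of $1$ and $\tau$, which happens exactly when the coefficient is a power of $\alpha$ times a power of $10$; in that case $\|\mu q\|$ can vanish. These degenerate cases must be checked, and if one occurs it must be handled by the elementary Legendre-type continued fraction argument instead. A secondary point is verifying $\Lambda_i\ne0$ in Steps 2 and 3, where the conjugation argument needs the bound $|c_\beta\beta^n|<1$ together with a positive rational coefficient; this works uniformly.
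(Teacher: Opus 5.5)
Your proposal is correct and is essentially the paper's proof. The steps coincide: the same $\Lambda_1$ with coefficient $9c_\alpha/d_1$ bounds $\ell$, and the same second form with coefficient $9c_\alpha/(d_1 10^{\ell}+d_2-d_1)$ bounds the middle block, followed by Matveev, Dujella--Peth\H{o} reduction and a finite search.

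The only real difference is your Step~3, which the paper does without. Your $\Lambda_2$ is in fact $O(10^{-(\ell+m)})$, not just $O(10^{-m})$. Moreover, the size relation $|n-3.7786(2\ell+m)|<2$, which you already state, bounds $n$ directly once $\ell$ and $m$ are bounded. So the third Matveev application, the weaker $(\log n)^3$ bound it produces, and the third reduction over all $(\ell,m,d_1,d_2)$ are redundant.
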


The paper is organized as follows. Section~\ref{sec:setup}
gives all the preliminaries that are related to the tribonacci sequence, the  needed lower bound for linear forms in logs and the Baker-Davenport reduction algorithm. In  Section~\ref{sec:lemma1}, we prove our main theorem by first giving a relation
between the Tribonacci index $n$ and the total digit length $2\ell+m$. Moreover, we construct the first linear form in
logarithms attached to the equation and derives an explicit upper bound
on the block length $\ell$ in terms of $n$.  We conclude that section by
constructing a second, independent linear form bounding $\ell+m$, closing
the loop to an absolute effective bound on $n$.
Finally, we reduce the bound on $n$ by using  a
Baker--Davenport reduction to a range small enough for exhaustive
search, which will complete the proof the theorem.

\section{Preliminaries}\label{sec:setup}

\subsection{The Tribonacci sequence}
The characteristic polynomial of the Tribonacci sequence $ \{T_n\}_{n\geq 0} $ is given by
\begin{eqnarray*}
\Psi (x) = x^3-x^2-x-1.
\end{eqnarray*}
This polynomial is irreducible in $ \mathbb{Q}[x] $ and has a positive real root 
\begin{eqnarray*}
\alpha = \dfrac{1}{3}\left(1+(19+3\sqrt{33})^{1/3}+(19-3\sqrt{33})^{1/3}\right),
\end{eqnarray*}
lying strictly outside the unit circle and two complex conjugate roots $\beta$ and $ \gamma $ lying strictly inside the unit circle. Furthermore, $ |\beta|=|\gamma|=\alpha^{-1/2} $. According to Dresden and Zu \cite{Dresden}, a Binet-like formula for the $ k $-generalized Fibonacci sequences is established. For the Tribonacci sequences, it states that
\begin{eqnarray}
T_n=C_{\alpha}\alpha^{n-1}+C_{\beta}\beta^{n-1}+C_{\gamma}\gamma^{n-1} ~~~ \text{  for all  } n\geq 1,\label{tribo}
\end{eqnarray}
where $ C_X=(X-1)/(4X-6) $. Dresden and Zu \cite{Dresden}, also showed that the contribution of the complex conjugate roots $ \beta $ and $ \gamma $ to the right-hand side of \eqref{tribo} is very small. And more precisely,
\begin{eqnarray}\label{triboa}
\left|T_{n}-C_{\alpha}\alpha^{n-1}\right|<\dfrac{1}{2}~~~\text{  for all  }  n\geq 1.
\end{eqnarray}
It is also a well known fact ( see \cite{Luca16, Ddamulira2}) that 
\begin{eqnarray}\label{tribo1}
\alpha^{n-2}\leq T_n\leq \alpha^{n-1} ~~~\text{  holds for all  } n\geq 1.
\end{eqnarray}
Let $ \mathbb{K}:=\mathbb{Q}(\alpha, \beta) $ be the splitting field of the polynomial $ \Psi $ over $ \mathbb{Q} $. Then $ [\mathbb{K}, \mathbb{Q}]=6 $. Furthermore, $ [\mathbb{Q}(\alpha):\mathbb{Q}]=3 $. The Galois group of $ \mathbb{K} $ over $ \mathbb{Q} $ is given by
\begin{eqnarray*}
\mathcal{G}:=\text{Gal}(\mathbb{K/Q})\cong \{(1), (\alpha\beta),(\alpha\gamma), (\beta\gamma), (\alpha\beta\gamma), (\alpha\gamma\beta)\} \cong S_3.
\end{eqnarray*}
Thus, we identify the automorphisms of $ \mathcal{G} $ with the permutations of the roots of the polynomial $ \Psi $. For example, the permutation $ (\alpha\gamma) $ corresponds to the automorphism $ \sigma: \alpha \to \gamma, \gamma \to \alpha, \beta\to \beta $.\\

In a different setting, the Tribonacci sequence can be also seen as
\[ T_k = c_{\alpha} \alpha_T^k + c_{\beta} \beta_T^k + c_{\gamma} \gamma_T^k \quad \mbox{ for all } k \geqslant 0, \]
where $\alpha_T$, $\beta_T$ and  $\gamma_T$ are the roots of the characteristic equation $x^3-x^2 - x -1=0$, with
\begin{align*}
\alpha_T = & \ \dfrac{1}{3}\left( 1+\sqrt[3]{19+3\sqrt{33}} + \sqrt[3]{19-3\sqrt{33}} \right),\\
\beta_T = & \ \dfrac{1}{6}\left( 2- \sqrt[3]{19+3\sqrt{33}} - \sqrt[3]{19-3\sqrt{33}} \right)+\dfrac{\sqrt{3}}{6}i\left(\sqrt[3]{19+3\sqrt{33}}- \sqrt[3]{19-3\sqrt{33}} \right),\\
\gamma_T = & \ \dfrac{1}{6}\left( 2- \sqrt[3]{19+3\sqrt{33}} - \sqrt[3]{19-3\sqrt{33}} \right)-\dfrac{\sqrt{3}}{6}i\left(\sqrt[3]{19+3\sqrt{33}}- \sqrt[3]{19-3\sqrt{33}} \right),
\end{align*}
and the coefficients
\begin{align*}
 c_\alpha = &\ \dfrac{\alpha_T}{(\alpha_T-\beta_T)(\alpha_T-\gamma_T)}=\dfrac{1}{-\alpha_T^2+4\alpha_T-1},\\
 c_\beta= &\ \dfrac{\beta_T}{(\beta_T-\alpha_T)(\beta_T-\gamma_T)}=\dfrac{1}{-\beta_T^2+4\beta_T-1},\\
 c_\gamma =&\ \dfrac{\gamma_T}{(\gamma_T-\alpha_T)(\gamma_T-\beta_T)}= \dfrac{1}{-\gamma_T^2+4\gamma_T-1}
\end{align*}
are the roots of the polynomial $44x^3-2x -1$. Note that
\begin{align*}
& 1.839 < \alpha_T < 1.840 && 0.336 < c_{\alpha} < 0.337\\
& \beta_T= \overline{\gamma_T} && 0.737< |\beta_T|= |\gamma_T| < 0.738 \\
& c_{\beta} = \overline{c_{\gamma}}  && 0.259< |c_{\beta}| = |c_{\gamma}| < 0.260.
\end{align*}

Moreover,  the Binet
formula for the Tribonacci sequence  gives
\begin{equation}\label{eq:binet}
T_n=g(\alpha)\alpha^n+e(n),\qquad g(\lambda):=\frac{\lambda}{\Psi'(\lambda)},\qquad
e(n):=g(\beta)\beta^n+g(\bar\beta)\bar\beta^n.
\end{equation}

\begin{lemma}\label{lem:height}
The algebraic number $g(\alpha)=\alpha/\Psi'(\alpha)$ has minimal
polynomial $44x^3-2x-1$ over $\mathbb Q$ and absolute logarithmic Weil
height
\[
h(g(\alpha))=\frac{\log44}{3}=1.26139654\ldots.
\]
\end{lemma}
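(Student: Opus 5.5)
Throughout, write $g=g(\alpha)=\alpha/\Psi'(\alpha)$ with $\Psi'(x)=3x^2-2x-1$. The plan has three steps: show that $g$ is a root of $44x^3-2x-1$, show that this polynomial is irreducible, and compute the height from the roots' absolute values via Mahler measure.

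\textbf{Step 1: $g$ is a root.} Since $\alpha^3=\alpha^2+\alpha+1$, we have $\alpha\Psi'(\alpha)=3\alpha^3-2\alpha^2-\alpha=\alpha^2+2\alpha+3$. Hence
\[
g=\frac{\alpha^2}{\alpha^2+2\alpha+3}.
\]
There are two ways to get the minimal polynomial.
\begin{itemize}
\item Take the norm: $\prod_{\sigma}(x-\sigma(g))=\prod_{\lambda\in\{\alpha,\beta,\gamma\}}\bigl(x-\lambda/\Psi'(\lambda)\bigr)$. Clear denominators using $\prod\Psi'(\lambda)=\pm\mathrm{disc}(\Psi)=-44$, and express the remaining elementary symmetric functions through $\lambda_1+\lambda_2+\lambda_3=1$, $\sum\lambda_i\lambda_j=-1$ and $\lambda_1\lambda_2\lambda_3=1$.
\item Compute directly: expand $44g^3-2g-1$ with $g$ written as above, reduce powers of $\alpha$ modulo $\Psi$, and check that the result is $0$.
\end{itemize}
Either route is a routine but somewhat tedious symmetric-function calculation. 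The preliminaries also note that $c_\alpha,c_\beta,c_\gamma$ are the roots of $44x^3-2x-1$ and identify these with $\lambda/\Psi'(\lambda)$, so this step may simply be quoted.

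\textbf{Step 2: irreducibility.} By the rational root theorem, the only possible rational roots are $\pm1,\pm\tfrac12,\pm\tfrac14,\pm\tfrac1{11},\pm\tfrac1{22},\pm\tfrac1{44}$. Checking each shows none is a root, so the cubic is irreducible over $\mathbb Q$. Since $g\in\mathbb Q(\alpha)$ has degree dividing $3$ and is not rational, $44x^3-2x-1$ is its minimal polynomial, with leading coefficient $a_0=44$.

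\textbf{Step 3: the height.} Use the Mahler-measure formula
\[
h(g)=\frac13\Bigl(\log 44+\sum_{i=1}^{3}\log\max\{1,|g_i|\}\Bigr),
\]
where the $g_i$ are the conjugates $c_\alpha,c_\beta,c_\gamma$. The preliminaries give $0.336<c_\alpha<0.337$ and $|c_\beta|=|c_\gamma|<0.260$, so every conjugate has absolute value less than $1$. The sum therefore vanishes and
\[
h(g)=\frac{\log 44}{3}=1.2613\ldots
\]
If one prefers a self-contained check of the bound $|g_i|<1$, use Vieta instead of the numerics. Here $\sum g_i=0$, $\sum g_ig_j=-2/44$ and $\prod g_i=1/44$. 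Combined with $|g_\beta|=|g_\gamma|$, this pins down $|g_\beta|^2=1/(44g_\alpha)$, and a direct check that $g_\alpha\approx0.3363$ gives all moduli below $1$.

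\textbf{Main obstacle.} The only real work is Step 1, verifying the minimal polynomial, and specifically the algebra of clearing denominators. I would do it by computing $N(\Psi'(\alpha))=-44$ and the norm-form coefficients symbolically.
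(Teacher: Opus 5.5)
Your outline follows the paper's route. The paper writes $g(\alpha)=(5\alpha^2-3\alpha-4)/22$ and reads off $x^3-\tfrac{1}{22}x-\tfrac{1}{44}$ as the characteristic polynomial of multiplication by $g(\alpha)$ on $\mathbb{Q}(\alpha)$. That is exactly your norm polynomial $\prod_\sigma(x-\sigma(g))$; the paper computes it from a $3\times3$ matrix where you use symmetric functions. Your Steps 2 and 3 are the paper's rational-root test and Mahler-measure evaluation.

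There is one concrete error, in the value you single out as the key input: $N(\Psi'(\alpha))=\prod_\lambda\Psi'(\lambda)$ is $+44$, not $-44$.
\begin{itemize}
\item For a monic cubic, $\prod_\lambda\Psi'(\lambda)=\prod_{i\ne j}(\lambda_i-\lambda_j)=(-1)^{3}\,\mathrm{disc}(\Psi)=-(-44)=44$.
\item Positivity is clear anyway, since $\Psi'(\alpha)>0$ and $\Psi'(\beta)\Psi'(\gamma)=|\Psi'(\beta)|^2$.
\item The sign matters. Using $\sum_\lambda g(\lambda)=0$, $\sum_\lambda\Psi'(\lambda)/\lambda=-2$ and $\alpha\beta\gamma=1$, your norm computation gives $\prod_\lambda\bigl(x-g(\lambda)\bigr)=x^3-\tfrac{2}{P}x-\tfrac{1}{P}$ with $P=N(\Psi'(\alpha))$. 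So $P=-44$ would return $44x^3+2x+1$ instead of $44x^3-2x-1$.
\end{itemize}

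Two smaller points.
\begin{itemize}
\item Quoting the preliminaries is not an argument here. The sentence there about $c_\alpha,c_\beta,c_\gamma$ being roots of $44x^3-2x-1$ is stated without proof, and this lemma is what justifies it.
\item In Step 3 you can avoid numerics entirely, both in your proof and the paper's. A root $x$ of $44x^3=2x+1$ with $|x|\ge1$ would satisfy $44|x|^3\le2|x|+1\le3|x|^3$, which is impossible. Hence every conjugate has modulus less than $1$, and $h(g(\alpha))=\tfrac{1}{3}\log 44$ follows at once.
\end{itemize}
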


\begin{proof}
Working in the basis $\{1,\alpha,\alpha^2\}$ of $K:=\mathbb Q(\alpha)$
(where $\alpha^3=\alpha^2+\alpha+1$), the extended Euclidean algorithm
applied, in exact rational arithmetic, to $\Psi'(x)=3x^2-2x-1$ and $\Psi(x)$
gives
\[
g(\alpha)=\frac{-4-3\alpha+5\alpha^2}{22}.
\]
The matrix of multiplication by $g(\alpha)$ on $K$ in this basis has
trace $0$, sum of principal $2\times2$ minors $-1/22$, and determinant
$1/44$; its characteristic polynomial is therefore
$x^3-\tfrac1{22}x-\tfrac1{44}$, which clears to the primitive integer
polynomial $44x^3-2x-1$. By the rational root test (divisors of $1$ and
of $44$), this polynomial has no rational root, so it is indeed the
minimal polynomial of $g(\alpha)$ over $\mathbb Q$, of degree $3$. Its
three roots are $g(\alpha)\approx0.336228$ and a complex conjugate pair
of modulus $\approx0.260$, all of modulus $<1$; hence $\log^+|\cdot|=0$
at each root, and
\[
h(g(\alpha))=\frac13\Big(\log44+\sum_{i=1}^{3}\log^+|g(\alpha)_i|\Big)=\frac{\log44}{3}.\qedhere
\]
\end{proof}

\subsection{Linear forms in logarithms}
In order to prove our main result Theorem \ref{thm:main}, we need to use a Baker--type lower bound for a nonzero linear form in logarithms of algebraic numbers. There are many such  bounds in the literature  like that of Baker and W{\"u}stholz from \cite{bawu07}.  We use the one of Matveev from \cite{Matveev2000}.  
Matveev \cite{Matveev2000} proved the following theorem, which is one of our main tools in this paper.

\noindent
Let $ \gamma $ be an algebraic number of degree $ d $ with minimal primitive polynomial over the integers
$$ a_{0}x^{d}+ a_{1}x^{d-1}+\cdots+a_{d} = a_{0}\prod_{i=1}^{d}(x-\gamma^{(i)}),$$
where the leading coefficient $ a_{0} $ is positive and the $ \eta^{(i)} $'s are the conjugates of $ \gamma $. Then the \textit{logarithmic height} of $ \gamma $ is given by
$$ h(\gamma) := \dfrac{1}{d}\left( \log a_{0} + \sum_{i=1}^{d}\log\left(\max\{|\gamma^{(i)}|, 1\}\right)\right).$$

In particular, if $ \gamma = p/q $ is a rational number with $ \gcd (p,q) = 1 $ and $ q>0 $, then $ h(\gamma) = \log\max\{|p|, q\} $. The following are some of the properties of the logarithmic height function $ h(\cdot) $, which will be used in the next sections of this paper without reference:
\begin{eqnarray}
h(\eta\pm \gamma) &\leq& h(\eta) +h(\gamma) +\log 2,\nonumber\\
h(\eta\gamma^{\pm 1})&\leq & h(\eta) + h(\gamma),\\
h(\eta^{s}) &=& |s|h(\eta) ~~~~~~ (s\in\mathbb{Z}). \nonumber
\end{eqnarray}

\begin{theorem}[Matveev]\label{Matveev11} Let $\gamma_1,\ldots,\gamma_t$ be positive real algebraic numbers in a real algebraic number field 
$\mathbb{K}$ of degree $D$, $b_1,\ldots,b_t$ be nonzero integers, and assume that
\begin{equation}
\label{eq:Lambda}
\Lambda:=\gamma_1^{b_1}\cdots\gamma_t^{b_t} - 1,
\end{equation}
is nonzero. Then
$$
\log |\Lambda| > -1.4\times 30^{t+3}\times t^{4.5}\times D^{2}(1+\log D)(1+\log B)A_1\cdots A_t,
$$
where
$$
B\geq\max\{|b_1|, \ldots, |b_t|\},
$$
and
$$A
_i \geq \max\{Dh(\gamma_i), |\log\gamma_i|, 0.16\},\qquad {\text{for all}}\qquad i=1,\ldots,t.
$$
\end{theorem}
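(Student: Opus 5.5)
This statement is not proved here. It is quoted from Matveev's paper~\cite{Matveev2000}, and the plan is to check that the version above is a faithful specialization of his explicit result rather than to reprove it. Reproving it would mean redoing the transcendence machinery (auxiliary function, zero estimates, Kummer descent), which is far beyond the scope of this paper. Concretely, I would take Matveev's Corollary~2.3 and read off the constant in the real case, when $\mathbb{K}\subset\mathbb{R}$. That corollary bounds $\log|\gamma_1^{b_1}\cdots\gamma_t^{b_t}-1|$ from below for nonzero $\Lambda$. The bound has the shape $-C(t,\varkappa)\,D^2(1+\log D)(1+\log B)A_1\cdots A_t$, where $\varkappa=1$ for a real field and $\varkappa=2$ in general.

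The key steps, in order, are as follows.

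\textbf{First}, identify Matveev's hypotheses with ours. The $\gamma_i$ must be positive real algebraic numbers lying in a real field of degree $D$. The $b_i$ are nonzero integers, with $B\ge\max|b_i|$. For the heights, the $A_i$ must satisfy $A_i\ge\max\{Dh(\gamma_i),|\log\gamma_i|,0.16\}$; since $\gamma_i>0$ is real, the principal logarithm is just the real logarithm.

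\textbf{Second}, evaluate Matveev's constant in the real case. He gives $C=\frac{1}{\varkappa}\left(\frac{e}{2}\right)^{\varkappa}30^{t+3}t^{3.5}\cdots$ up to the factor $t^{4.5}$ in the corollary form. With $\varkappa=1$ the prefactor is $e/2<1.4$, which yields exactly $1.4\cdot30^{t+3}t^{4.5}$.

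\textbf{Third}, observe that Matveev states his result for the linear form $\sum b_i\log\gamma_i$. The multiplicative form $\Lambda=\gamma_1^{b_1}\cdots\gamma_t^{b_t}-1$ follows because $\log|\Lambda|$ and $\log|\sum b_i\log\gamma_i|$ differ by at most $\log 2$ when $|\Lambda|<1/2$. If $|\Lambda|\ge 1/2$, the bound is trivial. In Matveev's Corollary~2.3 this conversion is already built in, so no extra loss appears.

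The only step needing care is the second: bookkeeping Matveev's constant correctly. The most likely mistake is confusing the real and complex cases, or the $t^{4.5}$ and $t^{3.5}$ exponents. I would cross-check the constant against its standard restatement in Bugeaud--Mignotte--Siksek (Theorem~9.4 there), which gives precisely the displayed inequality for real fields. In the rest of the paper the theorem is then used as a black box. It is applied with $t=3$ and $D=3$ in $\mathbb{Q}(\alpha)$, which is real since $\alpha$ is real; the remaining arguments $10$ and $g(\alpha)$ combinations are positive reals there, so the hypotheses of the real version are met.
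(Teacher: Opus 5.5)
Your plan agrees with the paper: it likewise gives no proof of this statement, quotes Matveev's real-field bound from~\cite{Matveev2000} (the same form as Theorem~9.4 of Bugeaud--Mignotte--Siksek), and uses it as a black box with $t=D=3$ in the real field $\mathbb{Q}(\alpha)$. One bookkeeping fix to your second step: Matveev's factor is $\frac{1}{\varkappa}\bigl(\frac12 e t\bigr)^{\varkappa}30^{t+3}t^{3.5}$, so for $\varkappa=1$ the extra power of $t$ comes from $(et/2)^{\varkappa}$ and the value is $\frac e2\,30^{t+3}t^{4.5}<1.4\cdot30^{t+3}t^{4.5}$; the $1.4$ is therefore a harmless rounding up that gives a slightly weaker valid bound, not an exact constant.
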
 

\subsection{Baker-Davenport reduction procedure}
During the course of our calculations, we get some upper bounds on our variables which are too large, thus we need to reduce them. To do so, we use some results from the theory of continued fractions. Specifically, for a nonhomogeneous linear form in two integer variables, we use a slight variation of a result due to Dujella and Peth{\H o} \cite{DujellaPetho1998}, which  itself is a generalization of a result of Baker and Davenport \cite{BakerDavenport1969}.

\noindent
For a real number $X$, we write  $||X||:= \min\{|X-n|: n\in\mathbb{Z}\}$ for the distance from $X$ to the nearest integer.
\begin{lemma}[Dujella, Peth\H o]\label{Dujjella}
Let $M$ be a positive integer, $p/q$ be a convergent of the continued fraction of the irrational number $\tau$ such that $q>6M$, and  $A,B,\mu$ be some real numbers with $A>0$ and $B>1$. Let further 
$\varepsilon: = ||\mu q||-M||\tau q||$. If $ \varepsilon > 0 $, then there is no solution to the inequality
$$
0<|u\tau-v+\mu|<AB^{-w},
$$
in positive integers $u,v$ and $w$ with
$$ 
u\le M \quad {\text{and}}\quad w\ge \dfrac{\log(Aq/\varepsilon)}{\log B}.
$$
\end{lemma}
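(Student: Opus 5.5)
The plan is a direct argument by contradiction. It uses only the definition of $\|\cdot\|$ and the triangle inequality. The continued-fraction hypotheses (that $p/q$ is a convergent and $q>6M$) will not be needed in the argument itself. Their role in practice is only to make $\|\tau q\|$ small, so that $\varepsilon>0$ has a chance of holding.

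Assume, for contradiction, that there are positive integers $u,v,w$ with $u\le M$, $w\ge \log(Aq/\varepsilon)/\log B$ and $0<|u\tau-v+\mu|<AB^{-w}$.

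First, multiply the inequality by $q$ to get
\[
|uq\tau-vq+\mu q|<qAB^{-w}.
\]
Next, let $p^*$ be an integer nearest to $q\tau$, and write $q\tau=p^*+\delta$ with $|\delta|=\|\tau q\|$. (Since $p/q$ is a convergent and $q\ge 2$, one has $|q\tau-p|<1/q\le 1/2$, so in fact $p^*=p$; this is not needed.) Then
\[
uq\tau-vq+\mu q=\mu q+(up^*-vq)+u\delta,
\]
and $up^*-vq$ is an integer.

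Since $\|\mu q\|$ is the distance from $\mu q$ to the nearest integer, we get
\[
\|\mu q\|\le|\mu q+up^*-vq|\le|uq\tau-vq+\mu q|+u|\delta|<qAB^{-w}+M\|\tau q\|,
\]
using $u\le M$. Rearranging gives
\[
\varepsilon=\|\mu q\|-M\|\tau q\|<qAB^{-w}.
\]

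Because $\varepsilon>0$, $A>0$ and $B>1$, we may take logarithms and obtain $B^{w}<Aq/\varepsilon$, that is, $w<\log(Aq/\varepsilon)/\log B$. This contradicts the assumed lower bound on $w$, so no such solution exists.

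No step is a genuine obstacle. The only point needing care is the direction of the inequality $\|\mu q\|\le|\mu q+\text{integer}|$, and the fact that $u|\delta|\le M\|\tau q\|$ follows from $u\le M$ together with the choice of $p^*$ as a nearest integer.
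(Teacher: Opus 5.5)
Your proof is correct and complete. The paper gives no proof of this lemma and simply quotes it from Dujella and Peth\H{o}; your argument is essentially their standard one: multiply by $q$, replace $q\tau$ by an integer $p^*$ nearest to it, and use $u\le M$ with the triangle inequality to get $\varepsilon\le\|\mu q\|-u\|\tau q\|<qAB^{-w}$. Taking a nearest integer $p^*$ instead of $p$ is a harmless simplification, and it correctly shows that the convergent condition and $q>6M$ play no logical role. They only make $\varepsilon>0$ likely in the paper's reduction computations, since $M\|\tau q\|<M/q<1/6$.
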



We will also need the following Lemma.

\begin{lemma}{\label{eqn1}}
If $x\in \mathbb{R}$ satisfies $\left|x\right|<\frac{1}{2}$, then $\left|\log(1+x)\right|<\frac{3}{2}\left|x\right|.$

\end{lemma}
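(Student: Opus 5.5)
The claim is that for $|x|<\tfrac12$ one has $|\log(1+x)|<\tfrac32|x|$. I would prove it directly from the Taylor series of the logarithm. The case $x=0$ must be excluded, since then both sides equal $0$ and the strict inequality fails; the lemma is only ever applied with $x\neq0$, so I read it for $0<|x|<\tfrac12$.

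For $|x|<1$,
\[
\log(1+x)=\sum_{k\ge1}\frac{(-1)^{k+1}x^k}{k},
\]
so by the triangle inequality
\[
|\log(1+x)|\le |x|\sum_{k\ge1}\frac{|x|^{k-1}}{k}.
\]
For $|x|<\tfrac12$ I would bound the sum by
\[
\sum_{k\ge1}\frac{|x|^{k-1}}{k}\le 1+\frac12\sum_{k\ge2}|x|^{k-1}=1+\frac{|x|}{2(1-|x|)}.
\]
Since $|x|<\tfrac12$ gives $1-|x|>\tfrac12$, the fraction $\frac{|x|}{2(1-|x|)}$ is strictly less than $\tfrac12$. Hence the sum is strictly less than $\tfrac32$, and multiplying by $|x|>0$ gives $|\log(1+x)|<\tfrac32|x|$.

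The only delicate point is strictness, and it comes for free from the strict inequality $|x|<\tfrac12$ together with $x\ne0$.

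As a cross-check, the worst case is $x\to-\tfrac12$, where $|\log(1-\tfrac12)|/\tfrac12=2\log2\approx1.386<1.5$. This agrees with the monotone behaviour of $|\log(1+x)|/|x|$, which is largest as $x$ approaches $-\tfrac12$.
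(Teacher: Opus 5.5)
Your proof is correct. For $0<|x|<\frac12$ the termwise bound $\sum_{k\ge1}|x|^{k-1}/k\le1+\frac{|x|}{2(1-|x|)}<\frac32$ holds, and multiplying by $|x|>0$ gives the strict inequality. You are also right that the statement as written fails at $x=0$, where both sides vanish, so it has to be read with $x\neq0$.

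The paper gives no proof of this lemma at all; it is quoted as a standard auxiliary fact. There is therefore no competing argument to compare with, and your series estimate supplies the missing justification.

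The restriction $x\ne0$ costs the paper nothing. The lemma is only invoked with $x=\Lambda_1$ and $x=\Gamma_2$, both of which are shown to be nonzero. In those places the paper actually uses only the weaker factor $2$, in $|z_1|<2|\Lambda_1|$ and $|z_2|<2|\Gamma_2|$.

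The quicker mean value theorem argument, $\log(1+x)=x/(1+\xi)$ with $\xi$ between $0$ and $x$, yields only the constant $2$. That would be enough for those applications but not for the constant $\frac32$ in the statement. This is why the series is the right tool here; summing it exactly even gives the sharp bound $-\log(1-|x|)/|x|<2\log 2$.
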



%
%

\section{Proof of Theorem \ref{thm:main}}\label{sec:lemma1}
\subsection{Bounding the variable $n$, $m$ and $\ell$}
Before proving our result, we will first prove some important relations.
By relation \eqref{eq:shape}, we set  $R_j=(10^j-1)/9$ to be a repunit. Then our the Diophantine problem became
\begin{equation}\label{eq:main}
T_n=d_1R_\ell(10^{\ell+m}+1)+d_2R_m10^\ell.
\end{equation}
Multiplying by $9$ and setting $N:=2\ell+m$, \eqref{eq:main} becomes
\begin{equation}\label{eq:main9}
9T_n+d_1=d_1\cdot10^{N}+(d_2-d_1)\cdot10^{\ell+m}+(d_1-d_2)\cdot10^{\ell}.
\end{equation}

We have the following first Lemma:

\begin{lemma}\label{lem:size}
Every solution of \eqref{eq:main} with $\ell,m\ge1$ satisfies
\[
|n-3.778583\,N|<2
\quad\hbox{with}\quad N=2\ell+m,
\]
where $\frac{\log 10}{\log \alpha}<3.778583.$
\end{lemma}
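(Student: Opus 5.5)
The plan is to compare the two-sided digit-length bounds on $T_n$ with the size estimates for $T_n$ from Section~\ref{sec:setup}, take logarithms, and divide by $\log\alpha$. Write $c:=\log10/\log\alpha$. Since $d_1\ge1$ is the leading digit and the representation \eqref{eq:shape} has exactly $N=2\ell+m$ digits, we have
\[
10^{N-1}\le d_1\cdot 10^{N-1}\le T_n<(d_1+1)\,10^{N-1}\le 10^{N}.
\]
First I would check that $n$ is not tiny: $\ell,m\ge1$ forces $N\ge3$, hence $T_n\ge 100$ and $n\ge 12$. In this range the error term in \eqref{triboa} is negligible relative to $C_\alpha\alpha^{n-1}$.

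\emph{Lower bound for $n-cN$.} Combine $10^{N-1}\le T_n$ with $T_n<C_\alpha\alpha^{n-1}+\tfrac12$ and $C_\alpha<0.337$, and absorb the $\tfrac12$ using $n\ge12$. Taking logarithms gives
\[
n-1> c(N-1)+\frac{\log(1/0.337)-\delta}{\log\alpha},
\]
with $\delta$ tiny. Numerically $\log(1/0.337)/\log\alpha\approx1.785$, which exceeds $c-1\approx2.7786-1$. So $n-cN>-c+1+1.78>-2$ comfortably; in fact $n-cN\ge0$ up to a small margin. Replacing the exact $c$ by the slightly larger $3.778583$ costs only $0.000\ldots\cdot N$. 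I would handle this by using the exact inequality with $c$ and noting that the stated constant is an upper bound for $c$.

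\emph{Upper bound for $n-cN$.} Use $T_n<(d_1+1)10^{N-1}$ together with $T_n>C_\alpha\alpha^{n-1}-\tfrac12$ and $C_\alpha>0.336$. Taking logarithms gives
\[
n-1<c(N-1)+\frac{\log\bigl((d_1+1)/(10\cdot0.336)\bigr)+\delta}{\log\alpha}.
\]
For $d_1\le 8$ this yields $n-cN<1-c+\bigl(\log(9/3.36)/\log\alpha\bigr)\approx 1.84$, which is within the claimed window. If that does not hold in the form needed, I would fall back on the cruder inequality $T_n\le\alpha^{n-1}$ from \eqref{tribo1}.

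\emph{Main obstacle.} The delicate step is this upper side, specifically the case $d_1=9$. There the digit bound degenerates to $T_n<10^N$, and the same computation only gives $n-cN<1-c+\log(1/0.336)/\log\alpha\approx 2.79$, which does not reach $2$. I expect this to be the point where the statement needs care. My first attempt would be to exploit the second digit block: $T_n<\overline{9\cdots9\,d_2\cdots}$ with $d_2\le8$, so $T_n<10^N-10^{N-\ell}$. When $\ell$ is small this gives a usable saving. I would also use integrality of $n$ combined with the fractional part of $cN$ to exclude the residual window.

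If these refinements do not close the gap uniformly, I would state explicitly that only $n-cN<2.79$ is obtained for $d_1=9$. Since the subsequent Matveev argument only uses $n\asymp N$ with an absolute additive constant, such a weaker constant would suffice downstream.
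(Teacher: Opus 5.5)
There is a genuine gap, and it comes from a wrong constant. In the formula $T_n=C_\alpha\alpha^{n-1}+C_\beta\beta^{n-1}+C_\gamma\gamma^{n-1}$ of Section 2, the coefficient is $C_\alpha=(\alpha-1)/(4\alpha-6)=\alpha g(\alpha)\approx0.6184$. The bounds $0.336<c_\alpha<0.337$ that you invoke belong to the other normalization $T_n=c_\alpha\alpha^{n}+\cdots$, where $c_\alpha=g(\alpha)$. Pairing $0.337$ with $\alpha^{n-1}$ shifts all your estimates for $n$ up by one unit. That alone is why you find the lower side slack and the upper side failing at $2.79$ for $d_1=9$.

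With the correct constant, the paper's argument uses only $10^{N-1}\le T_n<10^N$ and $|T_n-g(\alpha)\alpha^n|<0.4$. It gives $-1.217<n\log\alpha-N\log 10<1.091$, hence $-1.998<n-cN<1.791$ with $c=\log 10/\log\alpha$. So the upper bound holds for every $d_1$, including $d_1=9$, from $T_n<10^N$ alone. The leading-digit refinement, the saving from the $d_2$-block and the integrality argument are all unnecessary.

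Conversely, your lower-side conclusion that $n-cN\ge0$ up to a small margin is false. Your derivation uses only $10^{N-1}\le T_n$, yet $T_{17}=10609$ has $N=5$ and $n-cN\approx-1.89$. The lower side is exactly where the constant $2$ is nearly sharp. It must be done with the error term and $N\ge3$ tracked explicitly, which gives $n-cN>-1.9966$.

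As submitted, the proposal therefore does not prove the lemma. You stop at a possible $n-cN<2.79$ and appeal to what suffices downstream, which is not the stated inequality. Your numerics are also inconsistent, independently of the constant issue:
\begin{itemize}
\item You assert that $1.785$ exceeds $c-1$, which would require $c<2.79$.
\item Your upper display contains both $c(N-1)$ and the factor $10$ in the denominator, so it subtracts $\log 10$ twice. With your own constant and correct algebra, the case $d_1=8$ gives about $2.62$, not $1.84$.
\item $T_n\ge100$ gives $n\ge10$, not $n\ge12$.
\end{itemize}

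Finally, observing that $3.778583$ is an upper bound for $c$ does not dispose of the decimal, because $|3.778583-c|\,N$ is unbounded in $N$. The inequality has to be read, and is in effect proved in the paper, with $c$ itself. In fact $c=3.7785834\ldots$ slightly exceeds $3.778583$, so the side condition in the statement is itself inaccurate.
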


\begin{proof}
By \eqref{eq:shape}, $T_n$ has exactly $N=2\ell+m$ decimal digits, so
$10^{N-1}\le T_n<10^N$, and in particular $10^{N-1}\le9T_n+d_1<10^{N+1}$. On the other hand, by
\eqref{eq:binet}, $T_n=g(\alpha)\alpha^n+e(n)$, where
$|e(n)|\le2|g(\beta)||\beta|^n<2\cdot0.26\cdot0.7374<0.4$ for $n\ge1$. Hence
$$10^{N-1}-0.4<g(\alpha)\alpha^n<10^N+0.4.$$
Since $N\ge3$, we have $10^{N-1}-0.4\ge0.996\cdot10^{N-1}$ and $10^N+0.4\le1.0004\cdot10^N$.
Taking logarithms, and using $n\log\alpha=\log\big(g(\alpha)\alpha^n\big)-\log g(\alpha)$ with
$-\log g(\alpha)<1.0899$, we get
$$-1.217<n\log\alpha-N\log10<1.091.$$
Dividing by $\log\alpha=0.6093$ gives $|n-3.778583\,N|<1.217/0.6093<2$, which is the claim.
\end{proof}


Set $K:=\mathbb Q(\alpha)$, $C:=9g(\alpha)/d_1\in K$, and
\begin{equation}\label{eq:Lambda1}
\Lambda_1:=C\,\alpha^n\,10^{-N}-1=\eta_1^{b_1}\eta_2^{b_2}\eta_3^{b_3}-1,
\end{equation}
where $\quad \eta_1=\alpha\ ,\ \eta_2=10 ,\ \ \eta_3=C, b_1=n, b_2=-N,$ and $ b_3=1.$

From \eqref{eq:main9}, dividing by $d_1\cdot10^N$ and substituting
\eqref{eq:binet}, one gets the  identity
\begin{equation}\label{eq:exact-expansion}
\Lambda_1=\frac{d_2-d_1}{d_1}\cdot10^{-\ell}+\frac{d_1-d_2}{d_1}\cdot10^{-(\ell+m)}
-\frac{9e(n)+d_1}{d_1\cdot10^{N}}.
\end{equation}
Since $\ell<\ell+m<N$, the first term on the right dominates for
$\ell,m$ large, and a direct estimate of the last two terms, using
$\ell,m\ge1$, $1\le d_1,d_2\le9$, $|e(n)|<0.4$ (indeed $|e(n)|\le2|g(\beta)||\beta|^n<0.384$ for $n\ge1$), and $N\ge2\ell+1$, gives
\begin{equation}\label{eq:lambda1bound}
0<|\Lambda_1|<12\cdot10^{-\ell}.
\end{equation}

Before applying Matveev's theorem, we need to prove that $\Lambda_1\ne0$ for every solution of \eqref{eq:main} with $\ell,m\ge1$.



Suppose $\Lambda_1=0$. Then $C\alpha^n=10^N$, i.e.
\begin{equation}\label{eq:nonzero1-hyp}
9g(\alpha)\alpha^n=d_1\cdot10^N\in\mathbb Q.
\end{equation}
The minimal polynomial $\Psi(x)=x^3-x^2-x-1$ of $\alpha$ has three roots
$\alpha,\beta,\bar\beta$, so there is a field embedding
$\sigma:K=\mathbb Q(\alpha)\hookrightarrow\mathbb C$ with
$\sigma(\alpha)=\beta$. Since $g$ is a rational function with rational
coefficients ($g(\lambda)=\lambda/\Psi'(\lambda)$), $\sigma(g(\alpha))=g(\beta)$,
and $\sigma$ fixes $\mathbb Q$ pointwise, so applying $\sigma$ to both
sides of \eqref{eq:nonzero1-hyp} gives
\[
9g(\beta)\beta^n=d_1\cdot10^N.
\]
But $$|9g(\beta)\beta^n|\le9|g(\beta)||\beta|^n\le9\cdot0.26\cdot1<2.34$$
for all $n\ge0$ using the fact that $|g(\beta)|\approx0.25999$ and $|\beta|<1$. Therefore, $d_1\cdot10^N\ge10^N\ge10^3=1000$ for $\ell,m\ge1$. This give that 
$N=2\ell+m\ge3$. This is a contradiction, so $\Lambda_1\ne0$.\\

We now apply Matveev's theorem \cite{Matveev2000} to $\Lambda_1$ with
$t=D=3$. We can choose $B=n$ since from Lemma 3.1, one note that $n>N$.  Moreover,  $A_1=\log\alpha$, $A_2=3\log10$, and  $A_3=17$ since 
$$h(C)=h(9g(\alpha)/d_1)\le\log9+h(g(\alpha))+\log d_1\le2\log9+h(g(\alpha))=5.6558$$
for every $1\le d_1\le9$ by Lemma \ref{lem:height}, so $Dh(C)\le16.97<17.$
Thus, we obtain that 
For every $n\ge1$,
\begin{equation}\label{eq:approx}
\log|\Lambda_1|\ge-1.94\times 10^{14}(1+\log n).
.\end{equation}


Combining inequality \eqref{eq:approx} with the  bound obtained in 
\eqref{eq:lambda1bound} yields the following.

\begin{lemma}\label{thm:ell-bound}
Every solution $(n,\ell,m,d_1,d_2)$ of \eqref{eq:main} with $\ell,m\ge1$
satisfies
\[
\ell<F(n):=\frac{\log12+1.94\times 10^{14}(1+\log n)}{\log10}.
\]
\end{lemma}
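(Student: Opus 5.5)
The bound follows by comparing the upper bound \eqref{eq:lambda1bound} with the Matveev lower bound \eqref{eq:approx}. Both are available for every solution with $\ell,m\ge1$, since $\Lambda_1\ne0$ was established just before. Only logarithms need to be compared.

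First, take logarithms in \eqref{eq:lambda1bound}. This gives
\[
\log|\Lambda_1|<\log 12-\ell\log 10 .
\]
Second, invoke \eqref{eq:approx}, which is Theorem \ref{Matveev11} applied with $t=D=3$, $B=n$ and $A_1A_2A_3=\log\alpha\cdot 3\log10\cdot 17$. We should check that this choice of $B$ is legitimate: we need $B\ge\max\{n,N,1\}$, and Lemma \ref{lem:size} gives $n>3.77N-2>N$ since $N\ge3$. This gives
\[
-1.94\times10^{14}(1+\log n)\le\log|\Lambda_1| .
\]
Chaining the two inequalities yields
\[
\ell\log10<\log12+1.94\times10^{14}(1+\log n).
\]
Dividing by $\log10>0$ gives $\ell<F(n)$, which is the claim.

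The only point requiring care is the numerical constant in \eqref{eq:approx}, which I would double-check. We have
\[
1.4\cdot30^6\cdot3^{4.5}\cdot9\cdot(1+\log3)\approx 1.4\cdot7.29\times10^8\cdot140.3\cdot9\cdot2.0986\approx2.71\times10^{12},
\]
and
\[
A_1A_2A_3=0.6094\cdot6.9078\cdot17\approx71.6 .
\]
The product is about $1.94\times10^{14}$, consistent with the paper.

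A secondary remark concerns \eqref{eq:lambda1bound}. Its right-hand side $12\cdot10^{-\ell}$ majorizes the three terms of \eqref{eq:exact-expansion}:
\begin{itemize}[leftmargin=*]
\item the first is at most $8\cdot10^{-\ell}$;
\item the second is at most $8\cdot10^{-\ell-1}$;
\item the third is at most $(9\cdot0.4+9)10^{-N}\le 12.6\cdot10^{-2\ell-1}$.
\end{itemize}
Their sum is well below $12\cdot10^{-\ell}$, so this bound can be taken as given. Beyond that, no further obstacle arises; the lemma is a direct combination of the two displayed estimates.
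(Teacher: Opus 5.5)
Your proposal is correct and follows the paper's own argument: the lemma comes from chaining the upper bound \eqref{eq:lambda1bound} with Matveev's lower bound \eqref{eq:approx}, which is legitimate because $\Lambda_1\neq0$ and $B=n\ge N$, and then dividing by $\log10$. Your checks of the constant $1.94\times10^{14}$ (the true value is about $1.935\times10^{14}$, so rounding up is in the safe direction) and of the majorant $12\cdot10^{-\ell}$ are accurate, and they spell out what the paper leaves implicit.
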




To bound $m$ independently of $\ell$, we follow the strategy used by
Ddamulira~\cite{Ddamulira2025} for the closely analogous
Tribonacci-Lucas equation. Instead of subtracting the dominant term
from $\Lambda_1$, we
return to \eqref{eq:main9} and regroup it so as to isolate the block
$10^{\ell+m}$ against a denominator that absorbs $\ell$ as an integer
parameter. We then have the following result.

\begin{lemma}\label{lem:identity2}
For all $\ell,m\ge1$ and $1\le d_1\le9$, $0\le d_2\le9$,
\begin{equation}\label{eq:identity2}
9T_n-(d_1-d_2)\cdot10^{\ell}+d_1=10^{\ell+m}\cdot M(\ell),
\qquad M(\ell):=d_1\cdot10^{\ell}+d_2-d_1.
\end{equation}
Moreover $M(\ell)=d_1(10^\ell-1)+d_2\ge10^\ell-1\ge10^\ell/2$ for all
$\ell\ge1$.
\end{lemma}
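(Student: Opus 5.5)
The plan is to derive the identity directly from \eqref{eq:main9} by moving terms across and factoring. Then I will verify the lower bound on $M(\ell)$ by elementary estimates. Nothing from the linear-forms machinery is needed: this is purely an algebraic rearrangement of the digit equation.

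\emph{First step.} Starting from \eqref{eq:main9},
\[
9T_n+d_1=d_1\cdot10^{N}+(d_2-d_1)\cdot10^{\ell+m}+(d_1-d_2)\cdot10^{\ell},
\]
I move the term $(d_1-d_2)\cdot10^\ell$ to the left-hand side. This gives $9T_n-(d_1-d_2)10^\ell+d_1$ on the left. On the right it leaves
\[
d_1\cdot10^{2\ell+m}+(d_2-d_1)\cdot10^{\ell+m},
\]
using $N=2\ell+m$. Factoring out $10^{\ell+m}$ yields
\[
10^{\ell+m}\bigl(d_1\cdot10^\ell+d_2-d_1\bigr)=10^{\ell+m}M(\ell),
\]
which is exactly \eqref{eq:identity2}. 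As a sanity check, \eqref{eq:main9} itself follows from \eqref{eq:main} by multiplying by $9$ and expanding $9R_j=10^j-1$. So the identity holds for every tuple satisfying \eqref{eq:main}.

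\emph{Second step.} For the bound, I rewrite $M(\ell)=d_1(10^\ell-1)+d_2$. Since $d_1\ge1$, $d_2\ge0$ and $10^\ell-1>0$, we get $M(\ell)\ge 10^\ell-1$. Finally, $10^\ell-1\ge10^\ell/2$ is equivalent to $10^\ell\ge2$, which holds for every $\ell\ge1$.

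There is no real obstacle here. The only point needing care is the sign bookkeeping when moving the $10^\ell$ term. One should also note that the identity is meant for solutions of \eqref{eq:main}, not as an unconditional statement about arbitrary $n$. The positivity $M(\ell)>0$ is what will later allow dividing by $M(\ell)$ to build the second linear form $9g(\alpha)\alpha^n\,10^{-(\ell+m)}/M(\ell)-1$. That form has height controlled by $\ell$, which Lemma~\ref{thm:ell-bound} already bounds.
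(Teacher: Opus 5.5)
Your proposal is correct and is essentially the paper's argument: the identity follows from \eqref{eq:main9} by moving $(d_1-d_2)10^{\ell}$ to the left and using $10^{N}=10^{\ell}\cdot10^{\ell+m}$, and the bound on $M(\ell)$ uses only $d_1\ge1$, $d_2\ge0$ and $10^\ell\ge2$. Your explicit rearrangement, together with your remark that the identity holds for solutions of \eqref{eq:main} rather than for arbitrary $n$, makes the paper's supplementary numerical check for small $\ell,m$ unnecessary.
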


\begin{proof}
Identity \eqref{eq:identity2} follows from \eqref{eq:main9} and
$10^{N}=10^{\ell}\cdot10^{\ell+m}$; it was verified
directly, as an identity of integers, for all $1\le\ell\le5$,
$1\le d_1\le9$, $0\le d_2\le9$ ($d_1\ne d_2$), $m\in\{1,3,7\}$. The
lower bound on $M(\ell)$ follows from $d_1\ge1$, $d_2\ge0$, and
$10^\ell-1\ge10^\ell/2$ for $\ell\ge1$.
\end{proof}

Substituting the Binet formula \eqref{eq:binet} into
\eqref{eq:identity2} and dividing by $M(\ell)\cdot10^{\ell+m}$ gives,
with $C_2:=9g(\alpha)/M(\ell)\in K$,
\begin{equation}\label{eq:Gamma2}
\Gamma_2:=C_2\,\alpha^n\,10^{-(\ell+m)}-1=\frac{-9e(n)+(d_1-d_2)10^{\ell}-d_1}{M(\ell)\cdot10^{\ell+m}}.
\end{equation}
Using $|e(n)|<0.4$ (see above), $|d_1-d_2|\le9$, $d_1\le9$,
and Lemma~\ref{lem:identity2} to bound the numerator by $11\cdot10^\ell$
and the denominator below by $(10^\ell/2)\cdot10^{\ell+m}$, we get to
\begin{equation}\label{eq:gamma2bound}
0<|\Gamma_2|<22\cdot10^{-(\ell+m)}.
\end{equation}
Using a similar argument than  before, we can show that 
$\Gamma_2\ne0$ for every solution of \eqref{eq:main} with $\ell,m\ge1$.

Suppose $\Gamma_2=0$, i.e.\ $9g(\alpha)\alpha^n=M(\ell)\cdot10^{\ell+m}\in\mathbb Q$.
Applying the same embedding $\sigma:\alpha\mapsto\beta$ as before in the proof of $\Lambda_1\neq 0$, gives $9g(\beta)\beta^n=M(\ell)\cdot10^{\ell+m}.$
The left side has absolute value $<2.34$ as before, while the right
side satisfies $$M(\ell)\cdot10^{\ell+m}\ge(10^\ell-1)\cdot10^{\ell+m}\ge9\cdot10^{2}=900$$
for $\ell,m\ge1$ by the use of Lemma~\ref{lem:identity2}. This is a contradiction,
so $\Gamma_2\ne0$.


We apply Matveev's theorem again  to $\Gamma_2$, with
$t=D=3$, $\eta_1=C_2$, $\eta_2=\alpha$, $\eta_3=10$
$b_1=1$,$b_2=n$, $b_3=-(\ell+m)$. We can choose $B=n$ since by Lemma 3.1, we have that $n>N>\ell+m$. Since $M(\ell)$ is a positive rational
integer with $M(\ell)\le d_1\cdot10^\ell+9\le10^{\ell+1}$, the additivity
of the height gives
\[
h(C_2)\le\log9+h(g(\alpha))+\log M(\ell)<\log9+\frac{\log44}{3}+(\ell+1)\log10,
\]
so, using Lemma ~\ref{thm:ell-bound} giving $\ell<F(n)$,
\[
A_1':=D\,h(C_2)<3\Big(\log9+\tfrac{\log44}{3}+(F(n)+1)\log10\Big)=:3H(n)
\]
 and $A_2=\log\alpha$, $A_3=3\log10$.
Therefore,
for every $n\ge1$,
$$
\log|\Gamma_2|\ge-2.71\times 10^{12}\cdot(1+\log n)\cdot3H(n)\cdot A_2A_3,$$

Combining the above inequality with \eqref{eq:gamma2bound}:
\begin{equation}\label{eq:G-bound}
\ell+m<G(n):=\frac{\log22+2.71\times 10^{12}\cdot(1+\log n)\cdot3H(n)\cdot A_2A_3}{\log10}.
\end{equation}
On the other hand, the proof of Lemma~\ref{lem:size} gives the explicit bound
$$|n-3.778583\,N|<2<5.5. $$ Combined with $N=2\ell+m\le2(\ell+m)$, this
yields
\begin{equation}\label{eq:n_bound_recall}
n<7.5572\,(\ell+m)+5.5<7.5572\,G(n)+5.5.
\end{equation}
By solving the inequality $n<2.18\cdot 10^{28}(1+\log n)^2$ numerically, we thus obtained the absolute upper bound on $n$ as 

\[
n<1.3\times10^{32}.
\]
Consequently $\ell<6.4\times10^{15}$ and $\ell+m<1.7\times10^{31}$ by evaluating the values of $F(n)$ and $G(n)$, respectively. We summarize these bounds in the following lemma.

\begin{lemma}\label{thm:final-bound}
Every solution $(n,\ell,m,d_1,d_2)$ of \eqref{eq:main} with $\ell,m\ge1$
satisfies
\[
n<1.3\times10^{32}, \quad \ell<6.4\times10^{15} \quad \hbox{and}\quad \ell+m<1.7\times10^{31}
\]
\end{lemma}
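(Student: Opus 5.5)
The bounds in Lemma~\ref{thm:final-bound} come from assembling the preceding estimates into a self-referential inequality in $n$ alone, solving it numerically, and then substituting the result back. I would carry this out in four steps.

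\textbf{Step 1: a single inequality in $n$.} By Lemma~\ref{thm:ell-bound}, $\ell<F(n)$, where $F(n)=O(1+\log n)$ with explicit constant $1.94\times10^{14}/\log 10\approx 8.43\times10^{13}$. Therefore $H(n)=\log 9+\tfrac{\log 44}{3}+(F(n)+1)\log 10$ is also bounded by an explicit constant times $(1+\log n)$; for $n\ge 3$ one may take roughly $1.95\times10^{14}(1+\log n)$. Inserting this into \eqref{eq:G-bound}, with $A_2=\log\alpha$ and $A_3=3\log 10$, shows that $G(n)$ is at most an explicit constant times $(1+\log n)^2$.

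\textbf{Step 2: closing the loop through Lemma~\ref{lem:size}.} Since $n<3.778583\,N+2$ and $N=2\ell+m\le 2(\ell+m)$, we get $n<7.5572\,G(n)+5.5$. This has the form $n<c(1+\log n)^2$ with $c\approx 2.18\times10^{28}$, which is the inequality recorded in the text. I would recompute $c$ carefully by multiplying out $7.5572\cdot 2.71\times10^{12}\cdot 3\cdot 1.95\times10^{14}\cdot\log\alpha\cdot 3\log 10/\log 10$. The lower-order terms ($\log 22$, $5.5$, and the constant parts of $H$) can be absorbed into $c$, because $1+\log n\ge 1$.

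\textbf{Step 3: solving for $n$.} To solve $n<c(1+\log n)^2$, I would use the standard lemma: if $x<c(\log x)^2$ with $c$ large, then $x<4c(\log c)^2$. More directly, one checks that $x\mapsto x/(1+\log x)^2$ is increasing for $x>e$. It then suffices to verify that $x_0=1.3\times10^{32}$ satisfies $x_0>c(1+\log x_0)^2$. Here $\log x_0\approx 74.0$, so $(1+\log x_0)^2\approx 5625$ and $c\cdot 5625\approx 1.23\times10^{32}<1.3\times10^{32}$. Monotonicity then gives $n<1.3\times10^{32}$.

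\textbf{Step 4: the bounds on $\ell$ and $\ell+m$.} Substitute $n<1.3\times10^{32}$ into $F$ and $G$, both of which are increasing in $n$. This gives $\ell<F(1.3\times10^{32})\approx 8.43\times10^{13}\cdot 75\approx 6.3\times10^{15}<6.4\times10^{15}$. Likewise $\ell+m<G(1.3\times10^{32})$, which by Step 2 is at most $(n_{\max}-5.5)/7.5572\approx 1.7\times10^{31}$.

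The main obstacle is the bookkeeping of constants: turning $3H(n)\cdot A_2A_3$ into a clean multiple of $(1+\log n)$, and checking that the numerical constant is small enough for $x_0$ to satisfy the monotonicity check. If the constant comes out slightly larger than $2.18\times10^{28}$, the stated bound $1.3\times10^{32}$ might fail marginally. In that case I would keep track of exact values rather than rounded ones before concluding.
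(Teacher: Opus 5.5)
Your proposal follows the paper's own route. You feed $\ell<F(n)$ into $H(n)$, close the loop with $n<7.5572\,G(n)+5.5$, solve $n<c(1+\log n)^2$ with $c\approx2.18\times10^{28}$, and substitute $n<1.3\times10^{32}$ back into $F$ and $G$.

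Your constants check out. With your rounding, $c\approx2.19\times10^{28}$ and $c(1+\log x_0)^2\approx1.23\times10^{32}<1.3\times10^{32}$. Also $F(1.3\times10^{32})\approx6.31\times10^{15}$. The monotonicity of $x/(1+\log x)^2$ for $x>e$ is the right way to make the paper's ``solving numerically'' rigorous. Your aside $x<4c(\log c)^2$ would only give about $3.7\times10^{32}$, so the direct check is genuinely needed.

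There is one slip in Step 4. The shortcut $G(x_0)<(x_0-5.5)/7.5572$ with $x_0=1.3\times10^{32}$ gives about $1.72\times10^{31}$. That is larger than the claimed $1.7\times10^{31}$, so as written it does not prove the stated bound on $\ell+m$. Instead use $7.5572\,G(x_0)+5.5\le c(1+\log x_0)^2\approx1.23\times10^{32}$, which gives $G(x_0)\lesssim1.63\times10^{31}$. Alternatively, evaluate $G(1.3\times10^{32})\approx1.62\times10^{31}$ directly, as the paper does; either way you get the claimed $\ell+m<1.7\times10^{31}$.
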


\subsection{Reduction of the bounds}\label{sec:reduction}

Lemma~\ref{thm:final-bound} gives an effective but astronomically
large bound. We now reduce it via the classical
Baker--Davenport method, in the explicit form due to
Dujella and Peth\H{o}~\cite{DujellaPetho1998}.


We start by reducing the bound on $\ell$. From $\Lambda_1=C\alpha^n10^{-N}-1$ and \eqref{eq:lambda1bound}, we set
\begin{equation}\label{eq:reduction0}
z_1=\log C+n\log\alpha-N\log10
\end{equation}
 such that $\Lambda_1=e^{z_1}-1$. Indeed, $$|z_1|<2|\Lambda_1|<24\cdot10^{-\ell}$$ for $\ell\ge2$  which is valid since
$|\Lambda_1|<1/2$. We then divide the relation \eqref{eq:reduction0} by $\log10$, and obtain the inequality
\begin{equation}\label{eq: reduction1}
0<\big|\,n\theta-N+\mu_{d_1}\,\big|<A_1\cdot10^{-\ell}
\end{equation}
where 
$\theta:=\frac{\log\alpha}{\log10},\quad
\mu_{d_1}:=\frac{\log C}{\log10},\quad
A_1:=\frac{24}{\log10}.
$
We take $M:=2\times10^{32}$ and compute $\theta$ to $120$ decimal
digits via Newton's method for $\alpha$. The continued fraction of
$\theta$ was expanded to $150$ partial quotients (exact integer
arithmetic on convergents); the first convergent $p/q$ with $q>6M$ has
$q\approx1.45\times10^{33}$ ($34$ decimal digits). For each
$d_1\in\{1,\ldots,9\}$, computing $\mu_{d_1}$ to matching precision and
applying Lemma~\ref{Dujjella} with this convergent gives $\varepsilon>0$
for every $d_1\neq3$; for $d_1=3$ we have $\varepsilon\le0$ and we use instead the next convergent
(which also satisfies $q>6M$) and obtain $\varepsilon>0$. The resulting bound on $\ell$ is at most $36.6$ (the
worst case being $d_1=8$). Then, we obtain the following result

\begin{lemma}\label{prop:ell-reduced}
Every solution of \eqref{eq:main} with $\ell,m\ge1$ satisfies
$\ell\le36$.
\end{lemma}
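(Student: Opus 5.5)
The plan is to turn the Baker--Davenport computation described just before the statement into a proof. The starting point is the inequality
\[
0<\big|\,n\theta-N+\mu_{d_1}\,\big|<A_1\cdot10^{-\ell},
\]
which comes from $\Lambda_1$. First I will justify that this inequality holds whenever $\ell\ge2$; the case $\ell=1$ trivially satisfies $\ell\le36$.

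\begin{itemize}
\item For $\ell\ge2$, the estimate $|\Lambda_1|<12\cdot10^{-\ell}\le 0.12<1/2$ holds.
\item Lemma~\ref{eqn1} then gives $|z_1|=|\log(1+\Lambda_1)|<\tfrac32|\Lambda_1|<18\cdot10^{-\ell}$. This is even slightly better than the stated constant $24\cdot 10^{-\ell}$.
\item Dividing by $\log10$ yields the displayed inequality with $A_1=24/\log10$ and $B=10$.
\item The left-hand side is nonzero because $\Lambda_1\neq0$ was already established.
\end{itemize}

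Next I will check the hypotheses of Lemma~\ref{Dujjella} with $u=n$, $v=N$, $w=\ell$, $\tau=\theta$ and $\mu=\mu_{d_1}$.

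\begin{itemize}
\item \emph{Irrationality of $\theta$.} If $\log\alpha/\log10=p/q$, then $\alpha^q=10^p$. Applying the embedding $\sigma\colon\alpha\mapsto\beta$ gives $\beta^q=10^p$, which is impossible because $|\beta|<1\le10^p$ for $q>0$ and $p\ge 0$. Hence $\theta$ is irrational.
\item \emph{Range of $u$.} Lemma~\ref{thm:final-bound} gives $n<1.3\times10^{32}<M:=2\times10^{32}$, so $u=n\le M$.
\end{itemize}

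The lemma then says that no solution exists with
\[
\ell\ge \frac{\log(A_1 q/\varepsilon)}{\log 10}.
\]
For each $d_1\in\{1,\dots,9\}$ I will take the first convergent $p/q$ of $\theta$ with $q>6M$ (roughly $q\approx1.45\times10^{33}$) and compute $\varepsilon=\|\mu_{d_1}q\|-M\|\theta q\|$ to high precision. The precision has to be enough, about 120 digits, that the sign of $\varepsilon$ is certain. If $\varepsilon\le0$, as happens for $d_1=3$, I will move to the next convergent and repeat.

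The resulting thresholds $\log(A_1q/\varepsilon)/\log10$ are at most about $36.6$, with the worst case at $d_1=8$. So any solution has $\ell$ below this threshold, which means $\ell\le36$.

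The main obstacle is the reliability of the numerics. One must guarantee that $\varepsilon>0$ is not a rounding artifact. This requires computing $\theta$ and $\mu_{d_1}=\log(9g(\alpha)/d_1)/\log10$ with rigorous error bounds that are much smaller than $1/q\approx10^{-33}$. It also requires handling the degenerate case $d_1=3$, where the first convergent fails. If the next convergent also failed, I would pass to later convergents. As a last resort I would treat that residue class separately, for instance by noting that $\mu_3$ may nearly be a rational combination of $\theta$ and $1$, though I expect the next convergent to suffice, as stated.
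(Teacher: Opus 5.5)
Your proposal is correct and follows the paper's argument essentially step for step: the same inequality from $\Lambda_1$ with $A_1=24/\log10$, the same application of Lemma~\ref{Dujjella} with $M=2\times10^{32}$ and the first convergent with $q>6M$, the switch to the next convergent for $d_1=3$, and the worst-case threshold $36.6$ giving $\ell\le36$. Your extra checks fill in details the paper leaves implicit: treating $\ell=1$ separately (since $|\Lambda_1|<1/2$ needs $\ell\ge2$), proving $\theta$ irrational via the embedding $\alpha\mapsto\beta$, and verifying $n\le M$ from Lemma~\ref{thm:final-bound}.
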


We can now reduce the bound on $\ell+m$. Similarly, from $\Gamma_2=C_2\alpha^n10^{-(\ell+m)}-1$ and
\eqref{eq:gamma2bound}, we obtain that  $$|z_2|<2|\Gamma_2|<44\cdot10^{-(\ell+m)} \quad\hbox{for} \quad \ell+m\ge2.$$
It follows the inequality
$$0<\big|\,n\theta-(\ell+m)+\mu_{2,d_1,d_2,\ell}\,\big|<A_2\cdot10^{-(\ell+m)},$$
with
$\mu_{2,d_1,d_2,\ell}:=\frac{\log C_2}{\log10},\quad
A_2:=\frac{44}{\log10}.
$
Now $C_2=9g(\alpha)/M(\ell)$ depends on $\ell$, but by
Lemma~\ref{prop:ell-reduced} we only need to run
Lemma~\ref{Dujjella} over the finitely many triples
$(\ell,d_1,d_2)\in\{1,\ldots,36\}\times\{1,\ldots,9\}\times\{0,\ldots,9\}$
with $d_1\ne d_2$ and for at most $36\times81=2916$ cases, using $\ell\le37$ as a
safety bound in the computation. Using the convergent above, $\varepsilon>0$ in all but a few of these cases
(it fails for $461$ of the $2997$ triples with $\ell\le37$); for these we use the following convergents,
and the third one already gives $\varepsilon>0$ in every remaining case except one, for which the fourth convergent is needed.
In this way $\varepsilon>0$ in \emph{every} one of these cases, with the largest resulting bound $\ell+m<38.23$; no degenerate case
needed to be excluded, unlike the analogous step
in~\cite{Ddamulira2025}. Thus, we obtain the following result.

\begin{lemma}\label{prop:lm-reduced}
Every solution of \eqref{eq:main} with $\ell,m\ge1$ satisfies
$\ell+m\le38$, and consequently $m\le37$ and $N=2\ell+m\le74$.
\end{lemma}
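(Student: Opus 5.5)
The plan is to run the Dujella--Peth\H{o} reduction (Lemma~\ref{Dujjella}) on the second linear form $\Gamma_2$, with $\ell$ now a bounded parameter.

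First, from \eqref{eq:gamma2bound} we have $|\Gamma_2|<22\cdot10^{-(\ell+m)}<1/2$ whenever $\ell+m\ge2$, which always holds since $\ell,m\ge1$. Put $z_2:=\log C_2+n\log\alpha-(\ell+m)\log10$, so that $\Gamma_2=e^{z_2}-1$. Lemma~\ref{eqn1} (or the crude bound $|z_2|<2|\Gamma_2|$) then gives $|z_2|<44\cdot10^{-(\ell+m)}$. Dividing by $\log10$ yields
\[
0<\bigl|n\theta-(\ell+m)+\mu_{2,d_1,d_2,\ell}\bigr|<\frac{44}{\log10}\,10^{-(\ell+m)},
\]
with $\theta=\log\alpha/\log10$ as before. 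Positivity comes from $\Gamma_2\ne0$, which was already proved. We apply Lemma~\ref{Dujjella} with $u=n$, $v=\ell+m$, $w=\ell+m$, $B=10$, $A=44/\log10$, and $M=2\times10^{32}$, which exceeds the bound on $n$ from Lemma~\ref{thm:final-bound}. The convergent $p/q$ of $\theta$ is the same one with $q>6M$ that was used for $\Lambda_1$.

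Second, $C_2=9g(\alpha)/M(\ell)$ depends on $(\ell,d_1,d_2)$. By Lemma~\ref{prop:ell-reduced} these range over the finite set $1\le\ell\le36$ (we would run up to $37$ for safety), $1\le d_1\le9$, $0\le d_2\le9$, $d_1\ne d_2$. For each triple we compute $\mu_{2,d_1,d_2,\ell}=(\log(9g(\alpha))-\log M(\ell))/\log10$ to about $120$ digits and evaluate $\varepsilon=\|\mu q\|-M\|\theta q\|$. If $\varepsilon>0$, Lemma~\ref{Dujjella} gives $\ell+m<\log(Aq/\varepsilon)/\log10$. If $\varepsilon\le0$, we pass to the next convergents (all still have $q>6M$) until $\varepsilon>0$.

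This step is the main obstacle. One might worry about a degenerate case where $\mu$ is an integer combination of $\theta$ and $1$, which would make $\varepsilon\le0$ for every convergent. Here that cannot happen. Such a relation would force $9g(\alpha)\alpha^{k}/M(\ell)$ to be a rational power of $10$ for some integer $k$. Conjugating by $\sigma:\alpha\mapsto\beta$, exactly as in the proof that $\Gamma_2\ne0$, shows this is impossible. So only finitely many convergents are ever needed, and in practice only a few.

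Finally, taking the maximum of the bounds over all triples should give $\ell+m<39$, i.e.\ $\ell+m\le38$. Since $\ell\ge1$ this gives $m\le37$. Combining $\ell+m\le38$ with $\ell\le36$ from Lemma~\ref{prop:ell-reduced} gives $N=2\ell+m=\ell+(\ell+m)\le36+38=74$, as claimed.
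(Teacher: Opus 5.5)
Your proposal is essentially the paper's proof. It uses the same bound $|z_2|<44\cdot10^{-(\ell+m)}$ and the same Dujella--Peth\H{o} setup with $A=44/\log10$, $B=10$, $M=2\times10^{32}$. It runs the same convergent over the triples $(\ell,d_1,d_2)$ with $\ell\le37$, falling back to later convergents when $\varepsilon\le0$; the paper's computation gives the maximum $\ell+m<38.23$. It then makes the same deductions $m\le37$ and $N=\ell+(\ell+m)\le36+38=74$.

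Your non-degeneracy aside is not needed, since the computation itself certifies $\varepsilon>0$ in every case. Its justification also does not carry over verbatim from the proof that $\Gamma_2\ne0$, because the exponents of $\alpha$ and $10$ in a putative relation may be negative. It is easily repaired: $N_{K/\mathbb Q}(g(\alpha))=1/44$ and $N_{K/\mathbb Q}(\alpha)=1$, so $9g(\alpha)\alpha^k\in\mathbb Q$ would make $1/44$ a rational cube.
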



Combining Lemma~\ref{prop:lm-reduced} with the explicit form of
Lemma~\ref{lem:size} used throughout gives the
fully explicit bound $$n<7.5572\times38+5.5<295.$$

\subsection{Exhaustive search and the classification theorem}\label{sec:search}

With $n<295$, $1\le\ell\le37$ and  $1\le m\le38$ we conducted
an exhaustive computer search. We  directly used the exact values of the integer value of
\eqref{eq:main}, comparing $$d_1R_\ell(10^{\ell+m}+1)+d_2R_m10^\ell$$
against the list of Tribonacci numbers $T_1,\ldots,T_{295}$ for all
$1\le d_1\le9$, $0\le d_2\le9$ ($d_1\ne d_2$). We found no
solution. 

This is consistent with the pattern observed for the closely related
Lucas sequence~\cite{Batte2024} and for the $k$-generalized Lucas
sequences with $k\ge3$~\cite{BatteKaggwa2025}, both of which are also
empty; it contrasts with the Padovan case~\cite{ChalebgwaDdamulira2021},
where exactly two solutions exist, and with the $k$-Fibonacci and
Tribonacci-Lucas cases~\cite{BatteLuca2025,Ddamulira2025}.\\

\textbf{Data availability:} The data mentioned in the paper (computer codes) are available from the second author.\\

\paragraph{\textbf{Use of AI Tools:}} An AI assistant (Claude, Anthropic) was used for copyediting and for independent computational verification of the calculations in this paper. The authors take full responsibility for the content.

\end{document}